\newtheorem{thm}{Theorem}
\newtheorem{proposition}{Proposition}
\newtheorem{problem}[thm]{Problem}
\newcommand{\R}{{\mathbb R}}
\newcommand{\D}{{\mathbb D}}
\newcommand{\E}{{\mathbb E}}
\newcommand{\cN}{{\mathcal N}}
\newcommand{\cS}{{\mathcal S}}
\newcommand{\cQ}{{\mathcal Q}}
\newcommand{\tr}{\operatorname{trace}}
\newcommand{\argmin}{\operatornamewithlimits{argmin}} % Daniele
\newcommand{\ignore}[1]{}
\def\spacingset#1{\def\baselinestretch{#1}\small\normalsize}
\definecolor{grey}{rgb}{0.6,0.6,0.6}
\definecolor{lightgray}{rgb}{0.97,.99,0.99}
\definecolor{USred}{rgb}{0.74,0.1,0.1}
\definecolor{USblue}{rgb}{0.2,0.2,0.7}
\title{Optimal steering for non-Markovian Gaussian processes}
\author{Daniele Alpago, Yongxin Chen, Tryphon Georgiou and Michele Pavon
\thanks{D.~Alpago is with the Dipartimento di Ingegneria dell'Informazione, Universit\`a di Padova, 35131 Padova, Italy; {daniele.alpago@phd.unipd.it}}
\thanks{Y.~Chen is with the School of Aerospace Engineering, Georgia Institute of Technology, Atlanta, GA 30332;{yongchen@gatech.edu}}
\thanks{T.T.\ Georgiou is with the Department of Mechanical and Aerospace Engineering,
University of California, Irvine, CA 92697; {tryphon@uci.edu}}
\thanks{M.\ Pavon is with the Dipartimento di Matematica ``Tullio Levi-Civita",
Universit\`a di Padova, 35121 Padova, Italy; {pavon@math.unipd.it}}
\thanks{
Supported in part by the
NSF under grants 1509387, 1901599,
the AFOSR under grants FA9550-15-1-0045 and FA9550-17-1-0435, and by the University of Padova Research Project CPDA 140897.}}
\begin{document}

\maketitle
\begin{abstract}
At present, the problem to steer a non-Markovian process with minimum energy between specified end-point marginal distributions remains unsolved. Herein, we consider the special case for a 
non-Markovian process $y(t)$ which, however, assumes a finite-dimensional stochastic realization with a Markov {\em state process} that is fully observable. In this setting, and over a finite time horizon $[0,T]$, we determine an optimal (least) finite-energy control law that steers the stochastic system to a final distribution that is compatible with a specified distribution for the terminal output process $y(T)$; the solution is given in closed-form. This work provides a key step towards the important problem to steer a stochastic system based on partial observations of the state (i.e., an output process) corrupted by noise, which will be the subject of forthcoming work.
%We consider optimal steering of the probability distribution of a Gaussian non-Markov process $y(t)$ over a finite horizon $[0,T]$. We assume that a finite-dimensional stochastic realization of $y$ is given whose state process is fully observable. Among all finite-energy controls steering the state to a final distribution compatible with the given distribution of $y(T)$, we provide a closed-form characterization of the one entailing minimum energy. The case when only partial observations of the state corrupted by noise are available will be treated in a forthcoming paper.
\end{abstract}
\section{Introduction}
Throughout we will be considering a controlled evolution of the vector Gauss-Markov process $\{x(t) \mid 0\le t\le T\}$ that obeys the linear stochastic differential equation
\begin{subequations}
\begin{align}\label{controlled}&dx^u=A(t)x^u(t)dt+B(t)u(t)+B(t)dw(t),\\\nonumber &x^u(0)=\xi \mbox{ a.s.}
\end{align}
Here, as it is standard, $w$ is an $m$-dimensional Wiener process and $\xi$ is an $n$-dimensional Gaussian random vector which is independent of $w$. For simplicity we suppose that $\xi$ has zero mean, and that it has density
\begin{equation}\label{initial}\rho_0(x)=(2\pi)^{-n/2}\det (\Sigma^x_0)^{-1/2}\exp\left\{-\frac{1}{2}x'\left(\Sigma^x_0\right)^{-1}x\right\}.
\end{equation}
As it is common, we also assume that $A(\cdot)$ and $B(\cdot)$ are continuous matrix functions taking values in $\R^{n\times n}$ and $\R^{n\times m}$, respectively.

For this setting, in recent years, there has been considerable interest in the problem of {\em minimum-energy steering} of the (Gaussian) distribution of $x(t)$ to a target distribution $\mathcal N(0,\Sigma_T^x)$ at time $t=T$, \cite{CGP1,CGP3,HW,GT,bakolas}. Important extensions include \cite{CGP3} the more challenging case when the control process $u$ and the noise $w$ enter through different channels (i.e., having different ``input matrices'' $B$ in \eqref{controlled}), and the infinite-horizon case where the goal is to achieve with minimum power a specified stationary state \cite{CGP3}; the latter generalizes the classical work on covariance control of Skelton {et al.} \cite{HS,GS}. Motivation for such problems is manifold: they represents a most natural relaxation of classical LQR steering problems and have important applications in quality control and industrial manufacturing, vehicle path planning \cite{OT}, statistical physics as in {\em cooling} and control of nano-to-meter scale resonators, atomic force microscopy and so forth, see e.g., \cite{FilHonStr08,CGPcooling}.   

Historically, the origin of the steering problem stems from a {\em Gedankenexperiment} formulated by Schr\"odinger in the early thirties \cite {S1,S2}, seeking the most likely flow of particle distributions between observed end-point marginals. Schr\"odinger's problem amounted to a problem in the theory of large deviations (which was unavailable at that time). Indeed, thanks to Sanov's theorem \cite{SANOV},
the Schr\"odinger's problem amounts to seeking a probability distribution on particle trajectories having maximum entropy andwhich  is in agreement with the end-point specified marginal distributions \cite{For,Beu,Jam2,F2,Wak}. Then, in the late eighties and early nineties, following work of  Jamison, F\"ollmer, Nagasawa, Wakolbinger, Fleming, Holland, Mitter and others, a clear connection was made with stochastic control \cite{DP,DPP,PavWak91}.  The distribution on paths, corresponding to the  uncontrolled evolution, plays the role of the ``prior" measure in the maximum entropy problem which generalizes Schr\"odinger's original one. At about the same time, Blaquiere  \cite{blaquiere1992controllability}  studied the control of the Fokker-Planck equation and later Brockett studied the Louiville equation \cite{brockett2012notes} along a similar spirit, to steer distributions to a target one. This circle of control problems for uncertain system has recently been linked to yet another fast developing topic, Optimal Mass Transport (OMT) problem \cite{Vil}, when it was realized that {\em Schr\"odinger's  bridge problem} (SBP, as it seeks to ``bridge'' the two end-point marginals) may be viewed as a regularization of OMT and provides an effective computational approach to the latter \cite{Mik, mt, MT,leo,CGP5}.

Extending the Schr\"odinger problem to the case of non-Markov processes is a tantalizing one and a natural next step. While the general case is currently wide open, in the present paper we work out the special of steering the output of a Gauss-Markov model. More specifically, in conjunction with \eqref{controlled}, we consider the output process
\begin{equation}\label{output}
y(t)=C(t) x^u(t),
\end{equation}
\end{subequations}
where $C(\cdot)$ is continuous and takes values in $\R^{p\times n}$ for $p<n$. For instance, this case 
arises when we consider steering only some components of the state to a prescribed terminal distribution (see \ref{examples}). Clearly, $y$ by itself is not a Markov process. Thus, this seemingly innocuous problem falls into the category of Schr\"odinger bridge problems with non-Markov prior for which the form of the optimal control is, in general, unknown\footnote{See \cite{P94} for a considerably simpler ``half-bridge" problem where only the final distribution is prescribed.}. Problems where only a portion of the state needs to be specified arise, for instance, in thickness control (film extrusion) \cite{astrom,bakolas} where the remaining components of the state vector might either not be of interest or may be difficult/expensive to measure. In Section \ref{examples} we discuss a case where it is of interest to regulate only the distribution of the momentum of stochastic oscillators. 

%In this paper, we consider the problem where only the final distribution of some components of the state are prescribed. In fact, we consider the more general case in which we specify only the final distribution of $y(t)=C(t) x^u(t)$, $C(\cdot)$ continuous, taking values in $\R^{p\times n}$, $p<n$. In this setting, $y$ by itself is not a (vector) Markov process. Thus, our apparently innocuous problem falls into the class of Schr\"odinger bridges with non-Markov prior for which the form of the optimal control is, in general, unknown\footnote{The  paper \cite{P94} considers the much simpler ``half-bridge" problem where only the final distribution is prescribed.}. Such problems find applications, for instance, in thickness control (film extrusion) \cite{astrom,bakolas} where other state components might not be of interest or difficult/expensive to measure. Further motivation comes from controlling only the momentum distribution of stochastic oscillators, see Section \ref{examples}.

The outline of the paper is as follows. In Section \ref{background}, we recall some central results from \cite{CGP1} in the case of a Markovian prior. In Section \ref{formulation}, we give a precise formulation of our stochastic control problem. In Section \ref{sec:finitehorizon}, we provide a closed-form solution to our problem by finding the terminal time state covariance which can be reached with minimum energy among those complying with the assigned covariance of $y(T)$. Finally, Section \ref{examples} illustrates the results in a problem of steering the momentum distribution of a stochastic oscillator to a desired one.

\section{Background}\label{background}
Let $\mathcal U(\Sigma^x_0,\Sigma^x_T)$ be the family of {\em adapted\footnote{$u(t)$ only depends on $t$ and on $\{x^u(s); 0\le s\le t\}$ for each $t\in [0,T]$.}, finite energy} control functions such that (\ref{controlled}) has a strong solution on $[0,T]$ and $x(T)$ has distribution  $\mathcal N(0,\Sigma_T^x)$. The optimal steering problem reads
\begin{problem}\label{pro:steering} 
Determine 
\[
u^*:= \argmin_{u\in \mathcal U(\Sigma^x_0,\Sigma^y_T)} \,J(u):=\E\left\{\int_0^Tu(t)' u(t) \,dt\right\}.
\]
\end{problem}
In \cite[Theorem 8]{CGP1}, it was shown that, under controllability of the pair $(A(\cdot),B(\cdot))$ on the given time interval, $\mathcal U(\Sigma^x_0,\Sigma^x_T)$ is nonempty and the (unique) optimal control is a linear feedback of the state given by
\begin{equation}\label{optcontr}
u^\star(t)=-B(t)'Q(t)^{-1}x(t),
\end{equation}
where $P(t)$ and $Q(t)$, taking values in the set of symmetric, $n\times n$ matrices, are the unique {\em nonsingular} solutions on $[0,T]$ of the system of linear matrix equations
\begin{subequations}\label{system_Lyap}
\begin{align}\label{system_Lyap_P}
\dot{P}(t)&=A(t)P(t)+P(t)A(t)'+B(t)B(t)',\\ \label{system_Lyap_Q}
\dot{Q}(t)&=A(t)Q(t)+Q(t)A(t)'-B(t)B(t)',
\end{align}
\end{subequations}
nonlinearly coupled through the boundary conditions
\begin{subequations}\label{boundary_Lyap}
\begin{align}
(\Sigma^x_0)^{-1}&=P(0)^{-1}+ Q(0)^{-1},\\
(\Sigma^x_T)^{-1}&=P(T)^{-1}+Q(T)^{-1}.
\end{align}
\end{subequations}
The solutions to these equations can actually be provided in closed form as a function of $(\Sigma^x_0,\Sigma^x_T)$,
%solves the pair of differential Lyapunov equations
%\begin{align*}
%	\dot{Q}(t) &= A(t)Q(t) + Q(t)A(t)' - B(t)B(t)'\\
%	\dot{P}(t) &= A(t)P(t) + P(t)A(t)' + B(t)B(t)',
%\end{align*}
%coupled through the initial and final conditions
%\begin{align*}
%   (\Sigma_0^x)^{-1} &= P(0)^{-1}+Q(0)^{-1}\\
%   (\Sigma_T^x)^{-1} &= P(T)^{-1}+Q(T)^{-1},
%\end{align*}
%together with $P(\cdot)$, 
see \cite[Section III]{CGP1} for further details.

Let $P_0$ and $P_u$ be the probability measures on $C(0,T;\R^n)$, the $n$-dimensional continuous functions corresponding to the solutions of (\ref{controlled}) with control $0$, and $u\in\mathcal U$, respectively. Also let $\pi_0(x_0,x_T)$ and $\pi_u(x_0,x_T)$ be their initial-final joint density, respectively.  In \cite[Section IV]{CGP1}, a well known decomposition of the relative entropy \cite{F2} was extended to the case of degenerate diffusions, to show that the Schr\"odinger bridge problem with marginals densities $\rho_0=\mathcal N(0,\Sigma_0^x)$ and $\rho_T=\mathcal N(0,\Sigma_T^x)$  can be reduced to the following maximum entropy problem for distributions on a finite-dimensional space:
\begin{problem}\label{static}
	Minimize over densities $\pi_u$ on $\R^n\times\R^n$ the Kullback-Leibler index
	\begin{equation}\label{staticindex}
	\D(\pi_u\|\pi_0):=\int\int\left[\log\frac{\pi_u(x,y)}{\pi_0(x,y)}\right]\pi_u(x,y)dxdy
	\end{equation}
	subject to the (linear) constraints
	\begin{equation}\label{constraint} \int \pi_u(x,y)dy=\rho_0(x),\quad \int \pi_u(x,y)dx=\rho_T(y).
	\end{equation}
\end{problem}

Let $\Sigma^u_{0,T}$ be the covariance of $\pi_u(x_0,x_T)$. Since $u\in\mathcal U(\Sigma^x_0,\Sigma^x_T)$, $\Sigma^u_{0,T}$ has necessarily the structure
\begin{equation}\label{structure}\Sigma^u_{0,T}=\left[\begin{array}{cc}\Sigma^x_0 &Y^u \\ (Y^u)' & \Sigma^x_T\end{array}\right]
\end{equation}
for some $Y^u$.
Let $S_{0,T}$ instead be the covariance corresponding to $\pi_0(x_0,x_T)$. Then, it has the form
\begin{equation}\label{eq:S}
S_{0,T}=\left[\begin{matrix}\Sigma^x_0&\Sigma^x_0\Phi(T,0)'\\
\Phi(T,0)\Sigma_0^x&S_T\end{matrix}\right]
\end{equation}
where
\[
S_T=\Phi(T,0)\Sigma^x_0\Phi(T,0)'+\int_0^T\Phi(T,\tau)B(\tau)B(\tau)'\Phi(T,\tau)'d\tau,
\]
with $\Phi(t,s)$ denoting the state-transition of $A(\cdot)$ determined by
\[
\frac{\partial}{\partial t}\Phi(t,s)=A(t)\Phi(t,s), \quad \Phi(t,t)=I.\]
Thanks to the explicit form of relative entropy (Kullback-Leibler index) for Gaussian distributions \cite{COVER_THOMAS}, Problem \ref{static} can be expressed in terms of covariances as follows:
\begin{equation}\label{eq:remx}
\argmin_{(Y^u)\in\cQ^x} \quad -\log\det\Sigma^u_{0,T} + \tr(S_{0,T}^{-1}\,\Sigma^u_{0,T})
\end{equation} 
where $\Sigma^u_{0,T}$ is as in (\ref{structure}) and
\[
\cQ^x:=\left\{Y\in\R^{n\times n}:\,\Sigma^x_T-Y'(\Sigma^x_0)^{-1}Y>0 \right\},
\]
see \cite[Section IV]{CGP1} for the details.

\section{Problem formulation}\label{formulation}
We consider the output process in \eqref{output} and assume that the state $x(t)$ is fully observable.  The finite-dimensional {\em Markovian representation} ({\em stochastic realization}) for $y$ provided by (\ref{controlled})-(\ref{output}) is available. Such a representation, as is well-known, constitutes the starting point of Kalman filtering and much of optimal control theory, and the construction of such a model with minimal state vector dimension has been the subject of intense study \cite{LP}. This too is our starting point.

%{\color{red} Is this part necessary?We begin by observing that the compound system for the augmented (Markov) process \[ z(t)=\left(\begin{array}{c}x(t)\\y(t)\end{array}\right)\]is not reachable. Indeed, considering for simplicity the case of constant matrices, \begin{equation}\label{augmented}dz=\left[\begin{array}{cc}A &0\\ CA& 0\end{array}\right]zdt+\left[\begin{array}{c}B\\CB\end{array}\right]u(t)dt+\left[\begin{array}{c}B\\CB\end{array}\right]dw(t),\end{equation}the {\em reachability matrix} $\mathcal R^z$ of (\ref{augmented}) satisfies \[\mathcal R^z=\left(\begin{array}{c}\mathcal R^x\\C\mathcal R^x\end{array}\right),\]where $\mathcal R^x$ is the reachability matrix of (\ref{controlled}).This is mirrored by the fact that $\Sigma^z_T$, the final covariance for $z$, to be feasible, must have the structure\[  \Sigma_T^z=\left[\begin{array}{cc}\Sigma^x_T &\Sigma^x_TC' \\ C\Sigma^x_T & C\Sigma^x_TC'\end{array}\right].\]}

Let us denote by $\mathcal U(\Sigma^x_0,\Sigma^y_T)$ be the family of adapted control functions such that (\ref{controlled}) has a strong solution on $[0,T]$ and $y(T)$ has distribution  $\mathcal N(0,\Sigma_T^y)$.  We formulate the following {\em Schr\"{o}dinger Bridge Problem} with non-Markov prior:

\begin{problem}\label{formalization} Determine 
\[
u^*:= \argmin_{u\in \mathcal U(\Sigma^x_0,\Sigma^y_T)} \,J(u):=\E\left\{\int_0^Tu(t)' u(t) \,dt\right\}.
\]
\end{problem}
Notice that on one side, at $t=0$, the boundary constraint requires matching the covariance for the state vector (which can be relaxed) while on the other end, at $t=T$, requires matching the covariance of the output
\begin{equation}\label{terminalcondition}\Sigma^y_T=C(T)\Sigma^x_TC(T)'.
\end{equation} 
The value of $\Sigma_T^x$ is a parameter and there are in general several values for it such that (\ref{terminalcondition}) is satisfied\footnote{The case where only $\Sigma^y_0$ and $\Sigma^y_T$ are prescribed can be treated in a similar fashion by optimizing also with respect to $\Sigma_0^x$.}. Corresponding to each one of them, there is a feedback control in $\mathcal U(\Sigma^x_0,\Sigma^x_T)$ optimally performing the transfer of distributions according to \cite{CGP1}. Thus, the problem may be also viewed as that of determining the one final covariance $\Sigma_T^x$, among those  compatible with $\Sigma_T^y$, whose corresponding optimal control (\ref{optcontr}) has minimum energy.

Inspired by the reduction of the classical case leading to Problem \ref{static}, we proceed in the next section to derive a closed-form solution of Problem \ref{formalization}.

\section{Solution to the non-Markovian steering problem}\label{sec:finitehorizon}
In view of \eqref{eq:remx} in Section \ref{background}, Problem  \ref{formalization} can be rewritten as
\begin{equation}\label{eq:soc}
\begin{aligned} 
\argmin_{u\in\mathcal{U}} &\quad \E\left\{\int_0^T\,u(t)'u(t)\,dt\right\}\\
\text{subject to } &\quad x(0)\sim\cN(0,\Sigma^x_0),\quad x(T)\sim\cN(0,X),\\
&\quad CXC'=\Sigma_T^y,
\end{aligned}
\end{equation}
where $\Sigma^x_0$, $\Sigma_T^y$ constitute the given data while $X$ is a parameter. This can be further recast as
\begin{problem}\label{relentropyformul} Given $\Sigma^x_0$, $\Sigma_T^y$, and $S=S_{0,T}$ as in (\ref{eq:S}), determine
\begin{equation}\label{eq:rem}
\begin{aligned} 
\argmin_{(X,Y)\in\cQ} &\quad -\log\det\Sigma + \tr(S^{-1}\,\Sigma)
\end{aligned}
\end{equation}
subject to $\Sigma=\begin{bmatrix}\Sigma^x_0 & Y\\Y' & X\end{bmatrix}>0$ and $CXC'=\Sigma_T^y$.
\end{problem}

Now, let
\[
S^{-1}=
\begin{bmatrix}
N & V\\
V' & P
\end{bmatrix},
\]
and
\[
\cQ:=\left\{(X,Y)\in\cS_+\times\R^{n\times n}:\,X-Y'(\Sigma^x_0)^{-1}Y>0 \right\}
\]
where $\cS_+$ is the set of $n\times n$ symmetric positive definite matrices. 

We construct below the Lagrangian $\mathcal{L}$ introducing a Lagrange multiplier and consider the unconstrained minimization
\begin{equation}\label{eq:inflag}
	\inf_{(X,Y)\in\cQ} \mathcal{L}(X,Y,M).
\end{equation}
The Lagrangian is given by (we write, for simplicity, $\Sigma_0$ instead of $\Sigma_0^x$)
\begin{align*}
   \mathcal{L}(X,Y,M) %&= -\log\det\begin{bmatrix}\Sigma_0 & Y\\ Y' & X\end{bmatrix} + \tr\left(\begin{bmatrix}N & Q\\ Q' & P\end{bmatrix} \begin{bmatrix}\Sigma_0 & Y\\ Y' & X\end{bmatrix}\right) + \tr\left[M(CXC' - R)\right]\\\\
   &=-\log\det\left(X-Y'\Sigma_0^{-1}Y\right)+\tr(PX)\\&+\tr\left[M(CXC' - \Sigma_T^y)\right]+2\tr(V' Y)+c,	
\end{align*}
where $M=M'$ is a Lagrange multiplier and $c\in\R$ is a constant term. We first  check the convexity of $\mathcal{L}$ with respect to $(X,Y)$. 
\begin{proposition}  $\mathcal{L}$ is jointly convex in $(X,Y)$ over $\cQ$.
\end{proposition}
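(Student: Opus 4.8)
The plan is to isolate the only term that could break convexity and reduce it to a textbook fact. First I would observe that every term in $\mathcal{L}$ except the log-det term is affine in $(X,Y)$: indeed $\tr(PX)$ is linear in $X$; by the cyclic property $\tr[M\,CXC'] = \tr[(C'MC)\,X]$ is linear in $X$; the term $-\tr[M\Sigma_T^y]$ together with $c$ is constant for a fixed multiplier $M$; and $2\tr(V'Y)$ is linear in $Y$. Since affine functions are convex, it suffices to establish joint convexity of the single term $f(X,Y) := -\log\det(X - Y'\Sigma_0^{-1}Y)$ over $\cQ$.

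The key step is a change of viewpoint via the Schur complement. Because $\Sigma_0 > 0$, the Schur determinant identity gives
\[
\det\begin{bmatrix}\Sigma_0 & Y \\ Y' & X\end{bmatrix} = \det(\Sigma_0)\,\det\left(X - Y'\Sigma_0^{-1}Y\right),
\]
so that $f(X,Y) = -\log\det\Sigma + \log\det\Sigma_0$, where $\Sigma = \begin{bmatrix}\Sigma_0 & Y \\ Y' & X\end{bmatrix}$ and $\log\det\Sigma_0$ is a constant. Moreover, again by the Schur complement criterion, the domain $\cQ = \{(X,Y):\,X - Y'\Sigma_0^{-1}Y > 0\}$ is exactly the set on which $\Sigma > 0$; since $\{\Sigma > 0\}$ is an open convex cone and the map $(X,Y)\mapsto\Sigma$ is affine, $\cQ$ is itself convex, so joint convexity is a well-posed question on this domain.

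It then remains to invoke the standard fact that $M\mapsto -\log\det M$ is convex on the cone of positive definite matrices, and that convexity is preserved under precomposition with an affine map. I would recall the one-line justification of the former: restricting to a line $M = M_0 + tH$ with $M_0 > 0$ and $H$ symmetric, one has $-\log\det(M_0 + tH) = -\log\det M_0 - \sum_i \log(1 + t\lambda_i)$, where the $\lambda_i$ are the eigenvalues of $M_0^{-1/2}HM_0^{-1/2}$; each $-\log(1+t\lambda_i)$ is convex in $t$, hence so is the sum. Composing with the affine map $(X,Y)\mapsto\Sigma$ yields joint convexity of $f$, and adding back the affine terms leaves $\mathcal{L}$ jointly convex on $\cQ$.

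I expect the only point requiring care to be the reduction to $-\log\det\Sigma$. Once the log-det term is recognized as $-\log\det$ of an \emph{affine} function of $(X,Y)$, rather than as a composition $-\log\det \circ (X - Y'\Sigma_0^{-1}Y)$, which would otherwise force one to track the matrix concavity of the Schur complement and apply a matrix-monotone composition argument, the convexity is immediate from the textbook result. So the main obstacle is really just choosing the right representation; the Schur determinant identity does all the work.
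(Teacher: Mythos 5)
Your proof is correct, but it takes a genuinely different route from the paper's. The paper argues by direct matrix calculus: it computes the first variation $\delta\mathcal{L}$ and then the second variation along a direction $(\delta X,\delta Y)$, obtaining, with $Z:=X-Y'\Sigma_0^{-1}Y$,
\[
\delta^2\mathcal{L}
=\tr\Bigl[\bigl(Z^{-1}(\delta X-Y'\Sigma_0^{-1}\delta Y-\delta Y'\Sigma_0^{-1}Y)\bigr)^2\Bigr]
+2\tr\bigl[Z^{-1}\,\delta Y'\Sigma_0^{-1}\delta Y\bigr],
\]
and observes that both terms are non-negative on $\cQ$ (for the first, note $\tr[(Z^{-1}W)^2]=\tr[(Z^{-1/2}WZ^{-1/2})^2]\ge 0$ for symmetric $W$ since $Z>0$). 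You instead undo the very substitution that produced the Lagrangian: the objective in \eqref{eq:rem} was $-\log\det\Sigma+\tr(S^{-1}\Sigma)$, and the log-det term in $\mathcal{L}$ arose from the Schur factorization $\det\Sigma=\det\Sigma_0\det(X-Y'\Sigma_0^{-1}Y)$; recognizing the term as $-\log\det$ of an \emph{affine} function of $(X,Y)$ reduces everything to the textbook convexity of $-\log\det$ on the positive-definite cone, composed with an affine map. Both arguments are complete; what differs is what they buy. Your route is shorter, avoids second-variation calculus entirely, and delivers a genuine bonus the paper leaves implicit: it proves that $\cQ$ is convex (as the affine preimage of the cone $\{\Sigma>0\}$, where you also correctly note that $X-Y'\Sigma_0^{-1}Y>0$ already forces $X>0$, making the stated constraint $X\in\cS_+$ redundant), without which ``jointly convex over $\cQ$'' is not well posed and the subsequent step of treating the stationarity conditions \eqref{eq:varX}--\eqref{eq:varY} as sufficient for a global minimum lacks justification. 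The paper's computation, in exchange, produces an explicit Hessian quadratic form from which strict convexity can be read off directly (the second term is positive when $\delta Y\neq 0$, the first when $\delta Y=0$, $\delta X\neq 0$), which bears on uniqueness of the optimizer; your argument recovers the same with one extra remark, since $-\log\det$ is strictly convex and $(X,Y)\mapsto\Sigma$ is injective.
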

\begin{proof}
Let $\delta\mathcal{L}:=\delta\mathcal{L}(X,Y,M;\delta X, \delta Y)$ denoting the first variation of $\mathcal{L}$ in the direction $(\delta X, \delta Y)$. Applying the chain rule, 
\begin{align*}
\delta\mathcal{L}%= &-\delta\left(\log\det(X-Y'\Sigma_0^{-1}Y);\delta X, \delta Y\right)\\+ &\tr\left[(P+C' M C)\delta X + 2Q'\delta Y\right]\\
=&-\tr[(X-Y'\Sigma_0^{-1}Y)^{-1}\,\delta\left(X-Y'\Sigma_0^{-1}Y;\delta X, \delta Y\right)]\\
&+\tr\left[(P+C' M C)\delta X + 2V'\delta Y\right]\\
=&-\tr[(X-Y'\Sigma_0^{-1}Y)^{-1}(\delta X-Y'\Sigma_0^{-1}\delta Y - \delta Y'\Sigma_0^{-1}Y)]\\
&+\tr\left[(P+C' M C)\delta X+2V' \delta Y\right].
\end{align*}
To check the convexity it is sufficient look at the diagonal of the ``Hessian" of $\mathcal{L}$
\[
\delta^2\mathcal{L}:=\delta\mathcal{L}(X,Y,M;\delta X, \delta X, \delta Y, \delta Y).
\]
We have
\begin{align*}
\delta^2\mathcal{L} %&-\tr\big[\delta\left((X-Y'\Sigma_0^{-1}Y)^{-1};\delta X, \delta Y\right)\\&(\delta X-Y'\Sigma_0^{-1}\delta Y - \delta Y'\Sigma_0^{-1}Y) \\
%&+(X-Y'\Sigma_0^{-1}Y)^{-1}\\&\delta\left(\delta X-Y'\Sigma_0^{-1}\delta Y - \delta Y'\Sigma_0^{-1};\delta X,\delta Y\right)\big]\\\\
=&\tr\left[ \left((X-Y'\Sigma_0^{-1}Y)^{-1}(\delta X-Y'\Sigma_0^{-1}\delta Y - \delta Y'\Sigma_0^{-1}Y)\right)^2\right]\\
&+ 2\tr\left[(X-Y'\Sigma_0^{-1}Y)^{-1}(\delta Y'\Sigma_0^{-1}\delta Y)\right].
\end{align*}
%The first term is the $\tr(A^2)$ with $A$ symmetric, therefore it is non-negative in that $\tr(A^2)=\tr(AA')\ge 0$. The second term instead is the product of two symmetric positive definite (on $\cQ$) matrices. Then, if $A$ is the square root of $(X-Y'\Sigma_0^{-1}Y)^{-1}$ and $B$ is the square root of $(\delta Y'\Sigma_0^{-1}\delta Y)$, we have
%\[
%\tr(AA\,BB)=\tr(BA\,AB)=\tr((AB)' AB)\ge 0.
%\]
%Accordingly, $\delta^2\mathcal{L}(X,Y,M;\delta X, \delta Y)\ge 0$ on $\cQ$. 
which is clearly non-negative on $\cQ$.
\end{proof}

To find the minimum of $\mathcal{L}$ in $\cQ$ is therefore sufficient to solve
\begin{equation*}
   \delta\mathcal{L}(X,Y,M;\delta X, \delta Y) = 0,\,\qquad \forall\,(\delta X,\delta Y)\in\cS\times\R^{n\times n},
\end{equation*}
from which we get the two equations
\begin{align}
&P+C' MC-(X-Y'\Sigma_0^{-1}Y)^{-1}=0, \label{eq:varX}\\
&V+\Sigma_0^{-1}Y(X-Y'\Sigma_0^{-1}Y)^{-1}=0. \label{eq:varY}
\end{align}
To compute the optimal $(X,Y)$, we use these equations in the Lagrangian and then proceed to maximize the resulting (concave) functional with respect to $M$. Accordingly, the last equation we need is given by
\begin{equation}
\delta\mathcal{L}(X,Y,M;\delta M) = 0,\;\forall\,\delta M\in\cS
\;\iff\;  CXC'=\Sigma_T^y. \label{eq:varM}
\end{equation}

Let $Z:=X-Y'\Sigma_0^{-1}Y$ and note that $Z=Z'>0$. We immediately get $X=Z+Y'\Sigma_0^{-1}Y$ and
\begin{align}
	&\eqref{eq:varX} \quad\iff\quad Z^{-1} = P+C' M C, \label{eq:forZ1}\\
	&\eqref{eq:varY} \quad\iff\quad Y = -\Sigma_0VZ \notag.
\end{align}
Therefore, $X=Z+ZV'\Sigma_0VZ$ and 
\begin{equation}\label{eq:forZ2}
	\eqref{eq:varM} \quad\iff\quad CZC' + CZV'\Sigma_0VZC' =\Sigma_T^y.
\end{equation}
At this point we only need to find $Z$ from equations \eqref{eq:forZ1}, \eqref{eq:forZ2}. Since we can always find a state space transformation $\mathcal{T}$ such that $C\,\mathcal{T}=[I\,|\,0]$ (or a change of basis in the outputs' space), without loss of generality, we can always assume that $C=[I\,|\,0]$. Let
\begin{equation*}
	Z=\begin{bmatrix}
	Z_{11} & Z_{12}\\
	Z_{21} & Z_{22}
	\end{bmatrix},
	\quad
	V=\begin{bmatrix}
	V_{11} & V_{12}\\
	V_{21} & V_{22}
	\end{bmatrix},
	\quad
	P=\begin{bmatrix}
	P_{11} & P_{12}\\
	P_{21} & P_{22}
	\end{bmatrix}.
\end{equation*}
Equation \eqref{eq:forZ2} becomes 
\begin{equation}\label{eq:forZ11}
Z_{11}+\begin{bmatrix}Z_{11} & Z_{12}\end{bmatrix}
\underbrace{\begin{bmatrix}
K_{11} & K_{12}\\
K_{21} & K_{22}
\end{bmatrix}}_{V'\Sigma_0V\,>\,0}
\begin{bmatrix}Z_{11} \\ Z_{12}\end{bmatrix} = \Sigma_T^y,
\end{equation}
while equation \eqref{eq:forZ1} can be equivalently written as 
\begin{equation}\label{eq:forM}
\begin{bmatrix}
I & 0\\
0 & I
\end{bmatrix}
=
\begin{bmatrix}
Z_{11}(M+P_{11})+Z_{12}P_{12} & Z_{11}P_{12}+Z_{12}P_{22}\\
Z_{21}(M+P_{11})+Z_{22}P_{21} & Z_{21}P_{12}+Z_{22}P_{22}
\end{bmatrix}
\end{equation}
which reduces to the system of equations
\begin{equation}\label{eq:sysZ}
\left\{
\begin{split}
&Z_{11}P_{12}+Z_{12}P_{22}=0\\
&Z_{21}P_{12}+Z_{22}P_{22}=I\\
&Z_{12} = Z_{21}'
\end{split}
\right.
\iff
\left\{
\begin{split}
&Z_{21}=-P_{22}^{-1}P_{21}Z_{11}\\
&Z_{12} = Z_{21}'\\
&Z_{22}=P_{22}^{-1}-Z_{21}P_{12}P_{22}^{-1}
\end{split}
\right.
\end{equation}
Plugging $Z_{12}$, $Z_{21}$ and $Z_{22}$ into \eqref{eq:forZ11}, we get
\begin{equation}\label{eq:quadZ11}
Z_{11}+Z_{11}\,A\,Z_{11} = \Sigma_T^y,
\end{equation}
where
\[
A:=\begin{bmatrix}I & -P_{12}P_{22}^{-1}\end{bmatrix}
\begin{bmatrix}
K_{11} & K_{12}\\
K_{21} & K_{22}
\end{bmatrix}
\begin{bmatrix}I \\ -P_{22}P_{21}\end{bmatrix}>0.
\]
Equation \eqref{eq:quadZ11} is a quadratic equation with two solutions
\begin{equation}\label{eq:Z}
Z_{11}^{\pm} = A^{-\frac{1}{2}}\left[\pm\left(\frac{1}{4}I+A^\frac{1}{2}\Sigma_T^yA^\frac{1}{2}\right)^\frac{1}{2}-\frac{1}{2}I\right]A^{-\frac{1}{2}}.
\end{equation}
Clearly,  $Z=X-Y'\Sigma_0^{-1}Y>0$ by Schur complement, which implies $Z_{11}>0$. This singles out the solution $Z_{11}^+$.
We can now recover $Z$ from \eqref{eq:sysZ} and then  $X=Z+ZV'\Sigma_0VZ$ and $Y = -\Sigma_0VZ$. Finally, from \eqref{eq:forM}, one can find the multiplier $M$: 
\[
   M = (Z^+_{11})^{-1}-P_{11}-(Z_{11}^+)^{-1}Z_{12}P_{12}.
\]
The above results can be summarized as follows.

\begin{thm}
	Let $Z_{11}^{+}$ be as in \eqref{eq:Z} and $Z, X, Y$ be derived accordingly, then $(X, Y)$ solves Problem \ref{relentropyformul}. Furthermore, the solution to Problem \ref{formalization} coincides with the solution to Problem \ref{pro:steering} with $\Sigma_T^x = X$.
\end{thm}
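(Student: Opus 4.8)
The plan is to treat the two assertions of the theorem separately. For the first, that the constructed $(X,Y)$ solves Problem~\ref{relentropyformul}, I would argue that this is a \emph{convex} program and that the explicit construction produces a feasible point satisfying the first-order Lagrangian stationarity conditions, which for a convex problem is sufficient for global optimality. Concretely, the objective $-\log\det\Sigma+\tr(S^{-1}\Sigma)$ is jointly convex in $(X,Y)$ on $\cQ$ by the Proposition, the domain $\cQ$ is convex, and the constraint $CXC'=\Sigma_T^y$ is affine; hence any $(X,Y)\in\cQ$ that is feasible and admits a symmetric multiplier $M$ annihilating $\delta\mathcal{L}(X,Y,M;\delta X,\delta Y)$ is a global minimizer. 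Moreover the objective is \emph{strictly} convex, since the second variation $\delta^2\mathcal{L}$ vanishes only for $(\delta X,\delta Y)=(0,0)$ (for $\delta Y=0$ the surviving term is $\tr[(Z^{-1}\delta X)^2]$, positive unless $\delta X=0$, and for $\delta Y\neq 0$ the term $2\tr[Z^{-1}\delta Y'\Sigma_0^{-1}\delta Y]$ is strictly positive), so the minimizer is unique.

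Next I would verify that the construction starting from $Z_{11}^+$ yields such a point. The stationarity equations \eqref{eq:varX}--\eqref{eq:varY} are equivalent to \eqref{eq:forZ1} and $Y=-\Sigma_0 VZ$, and feasibility \eqref{eq:varM} becomes, after substituting $C=[I\,|\,0]$, precisely the quadratic \eqref{eq:quadZ11} for $Z_{11}$; thus a root of \eqref{eq:quadZ11} encodes primal feasibility, while $Z_{12},Z_{21},Z_{22}$ from \eqref{eq:sysZ}, together with $X=Z+ZV'\Sigma_0VZ$, $Y=-\Sigma_0VZ$ and the displayed expression for $M$, satisfy the remaining conditions by construction. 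The one point deserving care is membership in $\cQ$, i.e.\ $Z>0$: computing the Schur complement of $Z$ with respect to $Z_{11}$ and using \eqref{eq:sysZ} collapses it to $P_{22}^{-1}>0$, so $Z>0$ is equivalent to $Z_{11}>0$, which is exactly what the root $Z_{11}^+$ guarantees. Finally, since the objective blows up on the boundary of $\cQ$ (where $\det Z\to 0$) and grows at infinity through $\tr(S^{-1}\Sigma)$, the minimizer is attained in the interior, so the point found is the unique global minimizer.

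For the second assertion I would invoke the reduction developed in Sections~\ref{background}--\ref{formulation}. As recorded in \eqref{eq:soc}, Problem~\ref{formalization} is the joint minimization over the control $u$ and over admissible terminal state covariances $X$ subject to $CXC'=\Sigma_T^y$. Fixing any such $X$, the inner minimization over $u$ with endpoints $\cN(0,\Sigma_0^x)$ and $\cN(0,X)$ is exactly Problem~\ref{pro:steering} with $\Sigma_T^x=X$, whose optimal control is the state feedback \eqref{optcontr} and whose value, by the relative-entropy reduction of \cite[Section IV]{CGP1}, equals the static index in \eqref{eq:rem}. Optimizing the outer variable $X$ then coincides with Problem~\ref{relentropyformul}, so the $X$ from the first part is the minimum-energy-compatible terminal state covariance, and the optimal control of Problem~\ref{formalization} is that of Problem~\ref{pro:steering} with $\Sigma_T^x=X$.

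The step I expect to be the main obstacle is the rigorous justification of the reduction underlying the second assertion -- that the infimum over the joint pair $(u,X)$ genuinely decouples into the inner Markovian steering problem (whose value is the finite-dimensional Kullback--Leibler functional) followed by an outer minimization over $X$. This hinges on the decomposition of relative entropy for degenerate diffusions extended to the output setting, and on confirming that feasibility of the covariance problem corresponds to nonemptiness of $\mathcal U(\Sigma_0^x,\Sigma_T^y)$ via controllability of $(A(\cdot),B(\cdot))$. By contrast, the first assertion is comparatively routine once the convexity already established is combined with the explicit solution of the stationarity equations.
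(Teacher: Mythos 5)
Your proposal is correct and follows essentially the same route as the paper's own derivation: joint convexity of the Lagrangian (the Proposition), the stationarity conditions \eqref{eq:varX}--\eqref{eq:varM} reduced via $Z=X-Y'\Sigma_0^{-1}Y$ to the quadratic \eqref{eq:quadZ11} with the positive root $Z_{11}^{+}$ selected, and the decomposition of Problem \ref{formalization} into an inner Markovian steering problem (Problem \ref{pro:steering} with $\Sigma_T^x=X$) followed by an outer minimization over $X$, which is Problem \ref{relentropyformul}. Your additions -- strict convexity giving uniqueness, and the Schur-complement verification that the constructed $Z$ is positive definite (the complement collapsing to $P_{22}^{-1}>0$) -- fill in details the paper leaves implicit but do not change the argument.
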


\section{Example}\label{examples}
Consider controlling the Ornstein-Uhlenbeck model of physical Brownian motion
\begin{equation}\label{eq:BMexu}
\begin{split}
	dq^u(t) &= p^u(t)\\
	dp^u(t) &= -\beta\,p^u(t)dt - Kdt + u(t)dt + dw(t)
\end{split}
\end{equation}
corresponding to a given quadratic potential $V(q)=\frac{1}{2}q'Kq$ with $K$ symmetric, positive-definite, and $u(\cdot)$ is the control force. By setting
\[
   x=\begin{pmatrix}
   q\\p
   \end{pmatrix},\quad
   A=\begin{pmatrix}
   0 & I\\
   -K & -\beta I
   \end{pmatrix},\quad
   B=\begin{pmatrix}
   0\\
   I
   \end{pmatrix},   
\]
model \eqref{eq:BMexu} becomes
\begin{align*}
  &dx^u(t) = A\,x^u(t) + B\,u(t) + B\,dw(t)\\
  &x^u(0) = \xi \text{ a.s.}
\end{align*}
where $\xi$ is zero-mean Gaussian
with $\Sigma_0^x=I/2$, and the pair $(A,B)$ is controllable. We consider a state dimension of $n=2$ and we assume for simplicity that the units are such that $K=I$ and $\beta=1$. 

We would like to steer the Gaussian distribution of the momentum equal to a final distribution at time $T=1$ with $\Sigma_1^p=1/16$ minimizing the quadratic control energy
under the controlled dynamics \eqref{eq:BMexu}. In other words, we are prescribing only the final covariance matrix of $y(t)=C\,x(t)$ with $C=\left[0\,|\,I\right]$. Figure \ref{fig:phctrvv} shows the trajectories of the state variables in the phase space (left) and the corresponding control efforts (right), i.e. the intersections of the phase plot with the slice planes $p$ and $q$ respectively. 
\begin{figure}[h!]
	\centering
	\includegraphics[scale=0.57]{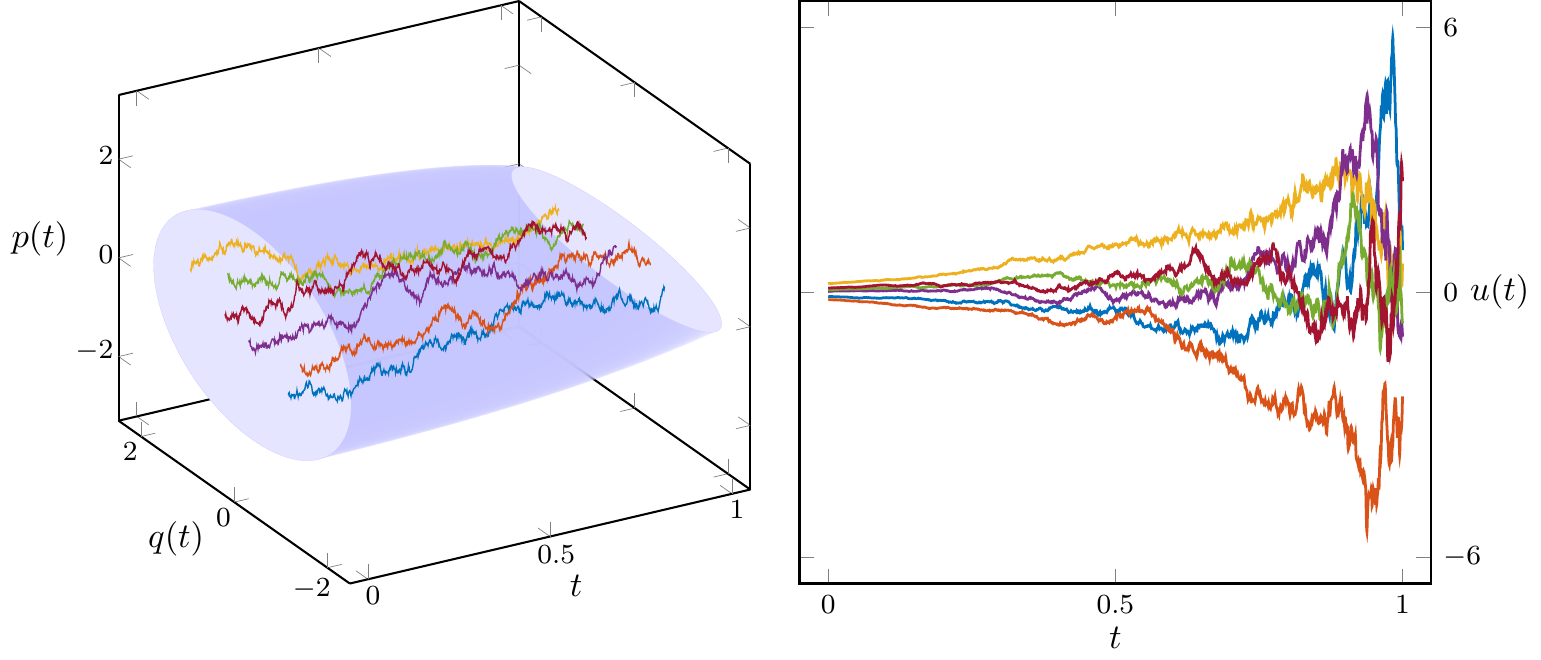}
	\caption{Realizations on the phase space (left) and relative control efforts (right). Control on the momentum.}
	\label{fig:phctrvv}
\end{figure}
Figure \ref{fig:xvsecvv} highlights instead the trajectories of position (left) and momentum (right) with the corresponding confidence interval.
\begin{figure}[h!]
	\centering
	\includegraphics[scale=0.58]{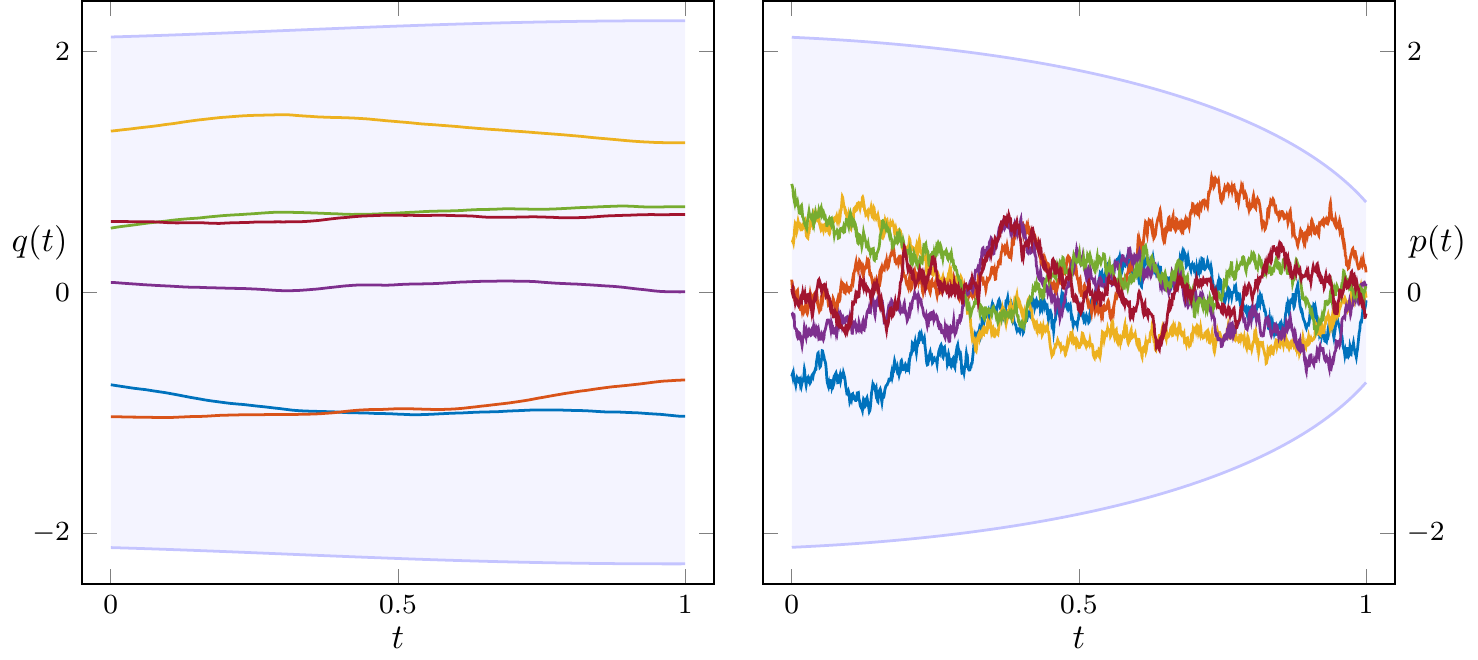}
	\caption{Position's trajectories (left) and momentum's trajectories (right). Control on the momentum.}
	\label{fig:xvsecvv}
\end{figure}
In all the figures, the transparent blue tube represent the "$3\sigma$" confidence interval, i.e. its intersection with the slice plane $t$ is given by
\[
\left\{ (q,p)\in\R^2\,\bigg|\,
\begin{bmatrix}
q & p
\end{bmatrix}
\Sigma_t^{-1}
\begin{bmatrix}
q\\p
\end{bmatrix}
\le 3^2
\right\}.
\]%
The figures highlight the reduction of the variance of the momentum process as time increases to $T=1$.

\end{document}